\def\C{{\mathbb C}}
\def\R{{\mathbb R}}
\def\N{{\mathbb N}}
\def\e{{\varepsilon}}
\def\eps{{\varepsilon}}
\def\Eq#1#2{\mathop{\sim}\limits_{#1\rightarrow#2}}
\def\le{\leqslant}
\newcommand{\re}{\mathrm{Re}}
\newcommand\norm[1]{\left\lVert#1\right\rVert}
\DeclareMathOperator{\Tr}{Tr}
\theoremstyle{plain}
\newtheorem{theorem}{Theorem}[section]
\newtheorem{lemma}[theorem]{Lemma}
\newtheorem{corollary}[theorem]{Corollary}
\newtheorem{proposition}[theorem]{Proposition}
\newtheorem{hyp}{Assumption}[section]
\theoremstyle{definition}
\newtheorem{remark}[theorem]{Remark}
\newtheorem*{remark*}{Remark}
\numberwithin{equation}{section}
\begin{document}

\title[Semiclassical wave-packets with rotation]
{Semiclassical wave-packets for weakly nonlinear Schr\"odinger equations with rotation}

\author[X. Shen, C. Sparber]{Xiaoan Shen, Christof Sparber}

\address[X. Shen]
{Department of Mathematics, Statistics, and Computer Science, M/C 249, University of Illinois at Chicago, 851 S. Morgan Street, Chicago, IL 60607, USA}
\email{xshen30@uic.edu}

\address[C.~Sparber]
{Department of Mathematics, Statistics, and Computer Science, M/C 249, University of Illinois at Chicago, 851 S. Morgan Street, Chicago, IL 60607, USA}
\email{sparber@uic.edu}

	\begin{abstract}
	We consider semiclassically scaled, weakly nonlinear Schrödinger equations with external confining potentials and additional angular-momentum rotation term. 
	This type of model arises in the Gross-Pitaevskii theory of trapped, rotating quantum gases.
	We construct asymptotic solutions in the form of semiclassical wave-packets, which are concentrated in both space and in frequency 
	around an classical Hamiltonian phase-space flow. The rotation term is thereby seen to alter this flow, but not the corresponding classical action. 
	\end{abstract}
	
\date{\today}

\subjclass[2000]{81Q20, 35Q40, 81Q05.}
\keywords{Semiclassical asymptotics, coherent states, angular momentum, Gross-Pitaevskii theory}

\thanks{C.~Sparber has been supported by the MPS Simons foundation through award no. 851720}
	
\maketitle


\section{Introduction}\label{sec:intro}

Semiclassical wave-packets are a well-known tool in the approximate description of quantum mechanics as $\eps \simeq \hbar \to 0$. The latter represents a singular 
limiting regime which leads to highly oscillatory solutions in the corresponding Schr\"odinger dynamics, cf. \cite{Car} for a general introduction. 
In an attempt to overcome this issue, one seeks 
a representation for the exact quantum mechanical wave function $\psi^\eps$ via
    \begin{equation}\label{eq:approx}
    	\psi^{\e}(t,x) \Eq\eps0 \e^{-d/4} v\Big(t,\frac{x-q(t)}{\sqrt{\e}}\Big)e^{i(S(t)+p(t)\cdot (x-q(t)))/\e},\quad x\in \R^d.
    \end{equation}
Here, $q(t) \in \R^d$ and $p(t)\in \R^d$ denote the mean position and momentum at time $t\in \R$, whereas $S(t)\in \R$ is a purely 
time-dependent phase proportional to the classical action. Finally, the amplitude function $v(t, y)\in \mathbb C$ describes slowly varying 
changes due to dispersive effects within the dynamics. The right hand side of \eqref{eq:approx} corresponds to a wave
function which is well-localized (at scale $\sqrt\eps$) both in space and in frequency (or momentum). In particular, for wave functions of the form \eqref{eq:approx} 
the following three quantities
\begin{equation*}
  \|\psi^\eps(t)\|_{L^2(\R^d)},\quad \left\|\Big(\sqrt\eps
    \nabla-i\frac{p(t)}{\sqrt\eps}\Big) 
  \psi^\eps(t)\right\|_{L^2(\R^d)},\quad \text{and }
\quad \left\|\frac{x-q(t)}{\sqrt\eps}
  \psi^\eps(t)\right\|_{L^2(\R^d)}
\end{equation*}
are all of order $\mathcal O(1)$, as $\eps\to 0$.  

It is known that the amplitude $v$ satisfies a homogenized, i.e., $\eps$-independent, Schr\"odinger-type equation 
with an effective quadratic potential (see the derivation below). A popular ansatz for the solution $v$ of this homogenized equation is that of a Gaussian function, 
which can be shown to be propagated exactly. In this case, wave-packets of the form \eqref{eq:approx} comprise semiclassically scaled 
coherent states which minimize the Heisenberg uncertainty relation, cf. \cite{RoCo}.  
Coherent states allow to approximately describe the full quantum dynamics of $\psi^\eps$ via a system of 
ordinary differential equations for $q$, $p$, and the matrices used to parametrize the Gaussian function $v$, see \cite{Ha, He}.
This makes this type of approximation particularly interesting for numerical simulations, where one seeks to represent 
general states $\psi^\eps(t,\cdot)\in L^2(\R^d)$ by a (well chosen) superposition of such Gaussian wave-packets, see \cite{LaLu, Zhou} for a general overview of this topic. 
One should note, however, that the derivation of the aforementioned ordinary differential equations can be rather involved, in particular if the 
Hamiltonian operator governing the dynamics of $\psi^\eps$ has a complicated expression, cf. \cite{CoRo}. 

Motivated by the mean-field description of trapped rotating quantum gases, the aim of this paper is 
to show how to use semi-classical wave-packets in the context of (weakly nonlinear) Schr\"odinger equations with external scalar potential and additional 
angular momentum rotation term. To this end, we consider the following Gross-Pitaevskii equation with rotation
\begin{equation}\label{eq:GP}
    	i\e\partial_{t} \psi^{\e}=-\frac{\e^2}{2}\Delta \psi^{\e}+V(x)\psi^{\e}+\lambda \eps^\alpha |\psi^\eps|^2 \psi+\e(\Omega\cdot L)\psi^{\e}, \quad \psi^{\e}_{\vert{t=0}}=\psi_{0}^{\e}.
\end{equation}
Here $(t,x)\in \R\times \R^d$ with $d=2$ or $3$, respectively, $\lambda \in \R$ denotes a coupling constant, which allows for both focusing and defocusing nonlinearities, and $\alpha = \alpha(d)>0$ is a 
parameter used to ensure the critical strength of the nonlinearity (see below). The operator $\Omega\cdot L$ describes the rotation around a given axis $\Omega\in \R^d$, where 
    $$L=-i x\wedge \nabla, $$
 is the quantum mechanical angular momentum operator. In addition, $V(x)$ denotes some external potential, for which we shall impose:  
   
    \begin{hyp}\label{asm1}
    	The potential $V\in C^{\infty}(\R^d;\R)$ is smooth and sub-quadratic, i.e.,
        $$\partial_x^{\alpha}V\in L^{\infty}(\R^d)\quad \forall |\alpha|\geq 2.$$
    \end{hyp}

A typical example, for such a potential is that of a harmonic confinement, i.e., $V(x) = \frac{1}{2}|x|^2$, which is often used to describe the electromagnetic trapping of 
experimental Bose-Einstein condensates. 

We further assume that the initial data $\psi_{0}^{\e}$ is given in the form of a localized wave-packet, i.e.
    \begin{equation}\label{eq:ini}
    	\psi_{0}^\e(x)= \e^{-d/4}v_0\Big(\frac{x-q_0}{\sqrt{\e}}\Big)e^{i(x-q_0)\cdot p_0/\e},\quad q_0,p_0\in \R^d,
    \end{equation}   
where $v_0\in \Sigma^3$, but not necessarily Gaussian. Here, and in the following we shall denote the natural energy space associated to \eqref{eq:GP} by
 \begin{equation}\label{eq:espace}
    \Sigma^k=\Bigl\{f\in L^2(\R^d):\quad \|f\|_{\Sigma^{k}}:=\sum_{|\alpha|+|\beta|\le k}\|x^{\alpha}\partial_{x}^{\beta}f\|_{L^2(\R^d)}<\infty \Big\}.
    \end{equation}

In the next two sections we shall show how to rigorously derive the dynamical equations needed to 
construct an approximation of the solution to \eqref{eq:GP} in the form \eqref{eq:approx}, first in the linear case $\lambda =0$, and second in the 
case of a critical nonlinearity, i.e., $\lambda\not =0$ and $\alpha= \alpha_{\rm crit}$, where
\[
\alpha_{\rm crit} = \begin{cases}
2 \quad \text{for $d=2,$}\\
\frac{5}{2} \quad \text{for $d=3.$}
\end{cases}
\]
The assumption $\alpha= \alpha_{\rm crit}$ thereby ensures that nonlinear effects are present in the dynamics of the slowly varying amplitude $v$, while for $\alpha>\alpha_{\rm crit}$ the 
problem becomes effectively linearizable. Of course, $\alpha_{\rm crit}>0$ means that the nonlinearity in \eqref{eq:GP} formally vanishes in the limit 
$\eps \to 0$, which is why we call it a weakly nonlinear regime. As we shall see, the approximation in the 
linear case will hold up to Ehrenfest time-scales $t\sim \mathcal O(\ln \frac{1}{\eps})$, whereas in the (weakly) nonlinear case, we will 
need to restrict ourselves to time-scales $t\sim \mathcal O(1)$. In the case $\lambda =0$ and $v_0$ explicitly given by a Gaussian, we shall show how to adapt 
the system of ordinary differential equations governing such wave packets to the case with rotation.



\section{The linear case}\label{sec:linear}

 In this section, we shall study the case $\lambda=0$, i.e., we consider
    \begin{equation}\label{lnls}
    	i\e\partial_{t}\psi^{\e}=-\frac{\e^2}{2}\Delta \psi^{\e}+V(x)\psi^\e+\eps(\Omega\cdot L)\psi^\e, \quad \psi^{\e}_{\vert{t=0}} = \psi_0^\eps,
    \end{equation}
 where $\psi_0^\eps$ is given in the form \eqref{eq:ini}. In order to understand how the rotation term influences the dynamics, 
 we first notice that the linear Hamiltonian $H$ can be seen as the 
$\eps$-quantization of the following classical Hamiltonian phase-space function $H:\R^{2d}\rightarrow \R$:
    $$H(x,\xi)=\frac12|\xi|^2+V(x)+\Omega\cdot (x\wedge \xi).$$ 
The corresponding Hamiltonian trajectories for a particle with position $q(t)\in \R^d$ and momentum $p(t) \in\R^d$ are therefore given by
    \begin{equation}\label{traj}
    \begin{cases}
    	\dot{q}=\nabla_p H(q,p)=p+\Omega\wedge q,\quad q(0) = q_0,\\
    	\dot{p}=-\nabla_q H(q,p)=-\nabla V(q)+\Omega\wedge p,\quad p(0)=p_0.
    \end{cases}
    \end{equation}
    \begin{lemma}[Classical dynamics]\label{lem:dyn}
    	Let $(q_0, p_0)\in \R^d\times \R^d$ and $V$ satisfy Assumption \ref{asm1}. Then, \eqref{traj} has a unique global, smooth solution $(q,p)\in C^{\infty}(\R;\R^d)^2$, which grows at most exponentially.
    \end{lemma}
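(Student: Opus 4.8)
The plan is to read \eqref{traj} as an autonomous first-order system on the phase space $\R^{2d}$ for the unknown $(q,p)$, governed by the vector field
$$F(q,p) = \bigl(\,p + \Omega\wedge q,\; -\nabla V(q) + \Omega\wedge p\,\bigr).$$
Since $V\in C^\infty(\R^d;\R)$ by Assumption \ref{asm1}, the gradient $\nabla V$ is smooth, while the rotation terms $\Omega\wedge q$ and $\Omega\wedge p$ depend linearly, hence smoothly, on $(q,p)$. Thus $F$ is smooth, in particular locally Lipschitz, and the Cauchy--Lipschitz (Picard--Lindel\"of) theorem yields a unique maximal solution on an open interval about $t=0$. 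The $C^\infty$-regularity in time then follows by a standard bootstrap: once $(q,p)\in C^1$, the right-hand side is $C^1$, so $(q,p)\in C^2$, and so on.

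The substance of the lemma is to promote this local solution to a global one together with the exponential growth bound, and for this I would derive an a priori estimate. First, sub-quadraticity of $V$ yields at most linear growth of its gradient: since $D^2V$ is bounded by Assumption \ref{asm1}, the mean value theorem gives $|\nabla V(q)|\le |\nabla V(0)| + \|D^2V\|_{L^\infty}\,|q| \le C(1+|q|)$. Second -- and this is the observation that renders the rotation term harmless here -- I would test the system against $(q,p)$ itself. Because the map $v\mapsto \Omega\wedge v$ is skew-symmetric, one has $q\cdot(\Omega\wedge q)=0$ and $p\cdot(\Omega\wedge p)=0$, so the angular-momentum contributions cancel and
$$\frac{d}{dt}\bigl(|q|^2+|p|^2\bigr) = 2\,q\cdot\dot q + 2\,p\cdot\dot p = 2\,q\cdot p - 2\,p\cdot\nabla V(q).$$
Setting $E(t):=1+|q(t)|^2+|p(t)|^2$ and inserting the linear bound on $\nabla V$ together with Young's inequality produces a differential inequality $|\dot E|\le C E$ on the maximal interval. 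Gr\"onwall's lemma then gives $E(t)\le E(0)\,e^{C|t|}$, which is precisely the claimed at-most-exponential growth; moreover this bound prevents finite-time blow-up, since a finite maximal time would force the trajectory to leave every compact set, contradicting the estimate. Hence the solution is global.

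I do not expect a genuine obstacle, as the whole argument is classical; the only points requiring care are the passage from ``$\partial_x^\alpha V\in L^\infty$ for $|\alpha|\ge 2$'' to the linear growth of $\nabla V$, and the skew-symmetry cancellation that removes the rotation terms from the energy identity, leaving an estimate structurally identical to the non-rotating case.
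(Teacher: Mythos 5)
Your argument is correct, but it takes a slightly different route from the paper's. You run a direct Gr\"onwall estimate on $E(t)=1+|q|^2+|p|^2$, using the skew-symmetry of $v\mapsto \Omega\wedge v$ to kill the rotation terms and the boundedness of $D^2V$ to get $|\nabla V(q)|\lesssim \langle q\rangle$; all of these steps are sound, and the blow-up alternative correctly upgrades the a priori bound to global existence. The paper instead eliminates $p$ to obtain the second-order equation $\ddot q=-\nabla V(q)+2\Omega\wedge\dot q-\Omega\wedge(\Omega\wedge q)$, multiplies by $\dot q$, and observes that the classical Hamiltonian $H(q,p)=\tfrac12|\dot q|^2-\tfrac12|\Omega\wedge q|^2+V(q)$ is exactly conserved; sub-quadraticity of $V$ then gives $|\dot q|\lesssim\langle q\rangle$, hence $|q(t)|\lesssim e^{c_0t}$, and the bound on $p$ follows from \eqref{traj}. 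The two approaches cost about the same, but they buy slightly different things: your version is more robust (it never uses that the system is autonomous or Hamiltonian, only skew-symmetry and linear growth of the field), whereas the paper's version identifies the conserved energy $H$, which is of independent interest in this context --- it is the same quantity whose indefiniteness (the $-\tfrac12|\Omega\wedge q|^2$ term) explains, as noted in the remark following the lemma, why genuinely exponentially growing trajectories can occur for harmonic traps, so the exponential rate in the lemma is not an artifact of the estimate.
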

    \begin{proof}
    	The local well-posedness of the solution can be inferred from the fact that $V$ is smooth. From \eqref{traj} we see that $q$ solves the following ordinary differential equation:
    	$$\ddot{q}=-\nabla V(q)+2\Omega\wedge\dot{q}-\Omega\wedge(\Omega\wedge q).$$
    	Multiply both sides by $\dot{q}$,
    	$$\frac{d}{dt}H(q,p)\equiv\frac{d}{dt}\Big(\frac{1}{2}|\dot{q}|^2-\frac{1}{2}|\Omega\wedge q|^2+V(q)\Big)=0.$$
    	We can see that $|\Omega\wedge q|^2\lesssim \langle q\rangle^2,$ and $V(q)\lesssim \langle q\rangle^2$ by Assumption \ref{asm1}, so
    	$$\dot{q}\lesssim \langle q \rangle,$$
    	which shows that 
    	$$|q(t)|\lesssim e^{c_0t}.$$
    	Plugging this into \eqref{traj} yields the same estimate for $p(t)$. 
    \end{proof}

\begin{remark}
The system \eqref{traj} has already been studied in \cite{ArNeSp} in the case of a purely harmonic, but not necessarily isotropic, 
confinement potential $V(x)=\sum_{j=1}^d \gamma_j x_j^2$. It is shown that in this case there are indeed initial data for which the 
solution grows exponentially forward or backward in time, and thus the classical dynamics is no longer trapped within a bounded phase-space region.  
\end{remark} 

Next, we compute the Lagrangian $L(q,p)$ corresponding to $H(q,p)$, via
    	\begin{align*}
    		L(q,p)&=p\cdot \dot{q}-H(q,p)\\
    		&=p\cdot (p+\Omega\wedge q)-\frac12 |p|^2-V(q)-\Omega\cdot (q\wedge p)\\
    		&=\frac12 |p|^2-V(q),
        \end{align*}
        using the fact that $\Omega\cdot(q\wedge p)=p\cdot(\Omega\wedge q)$. One can see that $L(q,p)$ is indeed of the same form as in the case without rotation. 
        In particular, we shall define the associated action function to be as usual, i.e.,
        \begin{equation}\label{act}
        	S(t)=\int_{0}^t \frac{1}{2}|p(s)|^2-V(q(s))\, ds.
        \end{equation}
        The latter will be used to determine the purely $t$-dependent part of the phase of our wave-packets.

    \begin{remark}
    	An alternative way to express the Hamiltonian dynamics with rotation is to introduce the canonical momentum $\pi(t):=p(t)+\Omega\wedge q(t)$, and compute
    	\begin{align*}
    		\dot{\pi}&=\dot{p}+\Omega\wedge \dot{q}\\
    		&=-\nabla V(q)+\Omega\wedge p+\Omega\wedge \pi\\
    		&=-\nabla V(q)+2\Omega \wedge \pi -\Omega\wedge(\Omega\wedge q).		
    	\end{align*}
Using this, \eqref{traj} can be rewritten in the following form:
    	\begin{equation}
    		\begin{cases}
    			\dot{q}=\pi,\\
    			\dot{\pi}=-\nabla V(q)+2\Omega\wedge \pi-\Omega\wedge(\Omega\wedge q).
    		\end{cases}
    	\end{equation} 
    This system has been used to describe rotating solutions of mean-field models for self-gravitating classical particles, see \cite{AS}. 
    \end{remark}

  Having derived the classical dynamics in the case with rotation, we can now turn to the derivation of the semiclassical approximation. To this end, we first change the 
  unknown $\psi^{\e}$ into a new function $u^{\e}$, via
    \begin{equation}\label{eq:resc}
    	\psi^{\e}(t,x)=\e^{-d/4}u^{\e}\Big(t,\frac{x-q(t)}{\sqrt{\e}}\Big)e^{i(S(t)+p(t)\cdot (x-q(t)))/\e},
    \end{equation}
    where $q(t)$ and $p(t)$ are solutions to \eqref{traj}, and $S(t)$ is defined by \eqref{act}. Plugging this ansatz into equation \eqref{lnls} 
    and assuming sufficient smoothness, we obtain, after some lengthy computations, that
    \begin{align*}
   0= & \, 	i\e\partial_{t}\psi^{\e}+\frac{\e^2}{2}\Delta \psi^{\e}-V(x)\psi^{\e}-\eps (\Omega\cdot L)\psi^{\e}\\
   = & \, \e^{-d/4}e^{i\phi/\e}\big(\e R_1+\e^{1/2}R_2+R_3\big),
    \end{align*}
    where we denote 
    $$\phi(t,x)=S(t)+p(t)\cdot(x-q(t)),$$ 
    and we also find
    $$R_1=i\partial_{t}u^{\epsilon}+\frac{1}{2}\Delta u^{\epsilon}-(\Omega\cdot L)u^{\epsilon},$$
    $$R_2=i\nabla u^{\epsilon}\cdot (p-\dot{q}+\Omega\wedge q),$$
    $$R_3=\Big(-\sqrt{\epsilon}y\cdot(\dot{p}-\Omega\wedge p)+p\cdot \dot{q}+V(q)-V(q+\sqrt{\epsilon}y)-|p|^2-\Omega\cdot(q\wedge p)\Big)u^{\epsilon}.$$
  In here, the fact that $S(t)$ is given by \eqref{act} is essential. Recalling that $p$, $q$ are assumed to be solutions to \eqref{traj}, we see that, indeed, $R_2\equiv0$, whereas $R_3$ simplifies to
    	$$R_3=u^{\e}\Big(\sqrt{\e}y\cdot \nabla V(q)+V(q)-V(q+\sqrt{\e}y)\Big).$$
  In order for $u^\eps$ to be a solution to \eqref{lnls}, we therefore have to guarantee that $$\e R_1+R_3=0,$$ which is equivalent to imposing
    \begin{equation}
    	i\partial_{t}u^{\e}=-\frac{1}{2}\Delta u^{\e}+\mathcal V^{\e}(t,y)u^{\e}+(\Omega\cdot L)u^{\e},\quad u^{\e}_{\vert{t=0}}=v_0.
    \end{equation}
    Here, $v_0$ is the initial amplitude induced by \eqref{eq:ini}, and $\mathcal{V}^\e$ is a time-dependent potential given by
    \begin{equation}\label{hess}
    	\mathcal{V}^\e(t,y)=\frac{1}{\e}\big(V(q(t)+\sqrt{\e}y)-V(q(t))-\sqrt{\e}y\cdot \nabla V(q(t))\big).
    \end{equation}
    By formally passing to the limit $\e\to 0$ in this expression, we observe that 
    \[
    \mathcal{V}^\e(t,y)  \Eq\eps0  \frac12y\cdot Q_V(t)y,
    \]
    i.e., a harmonic potential with $Q_V(t)=\nabla^2 V(q(t))$, the Hessian matrix of $V$. Note that Assumption \ref{asm1} implies that  
    $Q_V\in C^\infty_{\rm b}(\R;  \R^{2d})$.
    
    We therefore expect that $u^\eps$ is asymptotically close (in an appropriate norm) to $v$, defined to be the solution of the following, $\eps$-independent 
    effective amplitude equation:
    \begin{equation}\label{vlnls}
    	i\partial_{t}v=-\frac{1}{2}\Delta v+\frac12 \big(y\cdot Q_V(t)y\big) v+(\Omega\cdot L)v, \quad v_{\vert{t=0}}=v_0.
    \end{equation}
    We shall briefly study the existence of solutions $v$ to this equation. The corresponding time-dependent classical Hamiltonian function
    \begin{equation}
    	H(t,y,\xi):=\frac{1}{2}|\xi|^2+\frac{1}{2}y\cdot  Q_V(t)y+\Omega\cdot (y\wedge \xi)
    \end{equation}
    is smooth and sub-quadratic in $(y,\xi)\in \R^{2d}$ and therefore fits within the framework of \cite{Ki}, 
    where the fundamental solution of the associated Schr\"odinger propagator is constructed (see also the appendix). 
    
     \begin{lemma}[from \cite{Ki}]\label{lem:ex}
    	Let $d=2,3$, $v_0\in \Sigma^k$ and $V$ satisfy Assumption \ref{asm1}. Then, for all $k\in \N$, equation \eqref{vlnls} has a unique global solution $v \in C(\R, \Sigma^k)$, satisfying 
	\[\|v(t, \cdot)\|_{L^2(\R^d)}=\|v_0\|_{L^2(\R^d)} \ \forall  t\in \R.\]
	In addition, there exists a $C_{k,d}>0$, such that
	\[\|v(t, \cdot)\|_{\Sigma^k}\lesssim e^{C_{k,d}\, t}. \]
    \end{lemma}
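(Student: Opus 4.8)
The plan is to exploit the fact that the operator generating \eqref{vlnls}, namely
$$\mathcal H(t) := -\tfrac12\Delta + \tfrac12\, y\cdot Q_V(t)\, y + \Omega\cdot L,$$
is a \emph{quadratic} Hamiltonian: since $Q_V(t)=\nabla^2V(q(t))\in C^\infty_{\rm b}(\R;\R^{d\times d})$ by Assumption~\ref{asm1}, and $\Omega\cdot L=-i\,\Omega\cdot(y\wedge\nabla)$ is first order with linear coefficients, $\mathcal H(t)$ is a polynomial of degree two in the operators $y_j$ and $D_j=-i\partial_{y_j}$, with real, symmetric coefficients that are bounded uniformly in $t$. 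Everything in the statement will follow from this structure, which is exactly what makes the framework of \cite{Ki} applicable.

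First I would settle the $L^2$ theory and the conservation law. For each fixed $t$ the operator $\mathcal H(t)$ is symmetric and, being a quadratic Hamiltonian, essentially self-adjoint on $\mathcal S(\R^d)$; moreover $t\mapsto\mathcal H(t)$ is smooth and the natural form domain $\Sigma^1$ does not depend on $t$. This places us precisely in the setting of \cite{Ki}, which produces a strongly continuous unitary propagator $(U(t,s))_{t,s\in\R}$ on $L^2(\R^d)$. Then $v(t)=U(t,0)v_0$ is the unique solution of \eqref{vlnls}, and unitarity gives $\|v(t,\cdot)\|_{L^2}=\|v_0\|_{L^2}$ for all $t\in\R$.

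The heart of the matter is the $\Sigma^k$ bound, which I would obtain by energy estimates performed first on a dense core (take $v_0\in\mathcal S(\R^d)$ and extend by density afterwards). The key algebraic observation is that, because $\mathcal H(t)$ is quadratic, commutators with the generators close: writing $A=(y_1,\dots,y_d,D_1,\dots,D_d)$, one has $i[\mathcal H(t),A_j]=\sum_\ell F_{j\ell}(t)A_\ell$, where the matrix $F(t)$ is built from $Q_V(t)$ and $\Omega$ and is bounded uniformly in $t$ (it is exactly the linearization underlying \eqref{traj}). Consequently, for any multi-index $\gamma$, the commutator $[\mathcal H(t),A^\gamma]$ is again a polynomial in $A$ of degree at most $|\gamma|$ with uniformly bounded coefficients. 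Differentiating $\|A^\gamma v(t)\|_{L^2}^2$ in time, using $i\partial_tv=\mathcal H(t)v$ and writing $A^\gamma\mathcal H=\mathcal H A^\gamma+[A^\gamma,\mathcal H]$, the leading term $\langle A^\gamma v,\mathcal H(t)A^\gamma v\rangle$ is real (by self-adjointness of $\mathcal H(t)$) and therefore drops out, leaving only a commutator contribution $-2\,\im\langle A^\gamma v,[\mathcal H(t),A^\gamma]v\rangle$. By the above this is bounded by $\sum_{|\delta|\le|\gamma|}\|A^\delta v\|_{L^2}^2$. Summing over $|\gamma|\le k$ and applying Gronwall's inequality yields $\|v(t,\cdot)\|_{\Sigma^k}\lesssim e^{C_{k,d}\,t}\|v_0\|_{\Sigma^k}$, with a rate $C_{k,d}$ proportional to $k$ times the exponential rate of $F(t)$, mirroring Lemma~\ref{lem:dyn}. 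Finally, $v\in C(\R,\Sigma^k)$ follows from the strong $L^2$-continuity of $U(t,0)$, these locally uniform $\Sigma^k$ bounds, and density.

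I expect the main obstacle to be technical rather than conceptual: to justify the formal energy identities one must know a priori that $U(t,0)$ maps $\Sigma^k$ into itself, so the differentiation of $\|A^\gamma v\|_{L^2}^2$ and the commutator manipulations have to be carried out on a regularized problem (truncating or mollifying $A^\gamma$) and then passed to the limit using the uniform bounds. The bookkeeping of the higher-order commutators $[\mathcal H(t),A^\gamma]$ — verifying that no term of order exceeding $|\gamma|$ is generated, and tracking the constant $C_{k,d}$ — is the one place where the quadratic structure of $\mathcal H(t)$ must be used with care; in the framework of \cite{Ki} this is already encoded in the mapping properties of the constructed fundamental solution, which is why the lemma is quoted directly from there.
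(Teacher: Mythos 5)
The paper does not actually prove this lemma: it is imported wholesale from Kitada \cite{Ki} (the statement is even headed ``from \cite{Ki}''), the only justification in the text being the remark that the full symbol $\tfrac12|\xi|^2+\tfrac12 y\cdot Q_V(t)y+\Omega\cdot(y\wedge\xi)$ is smooth and sub-quadratic in $(y,\xi)$, so that Kitada's construction of the fundamental solution applies; the appendix revisits the closely related Fujiwara time-slicing construction of the propagator, but only to extract dispersive and Strichartz bounds for Section~\ref{sec:nonlinear}. Your argument is therefore a reconstruction rather than a parallel of the paper's proof, and as such it is sound: unitarity of the propagator gives well-posedness and $L^2$-conservation, and the fact that the commutator of the quadratic Hamiltonian with the generators $y_j$, $D_j$ closes into a linear expression whose coefficients are bounded \emph{uniformly in $t$} (because $Q_V\in C^\infty_{\rm b}$ by Assumption~\ref{asm1} and Lemma~\ref{lem:dyn}) yields the $\Sigma^k$ bound by Gronwall. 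Two points are worth making explicit if you write this out. First, the cross term $\Omega\cdot(y\wedge\xi)$ means the Hamiltonian is not of the form ``kinetic plus potential'', so you should either verify that the framework of \cite{Ki} admits such first-order terms with linear coefficients, or simply conjugate by $e^{it\Omega\cdot L}$ as in \eqref{eq:tchange}, which removes the rotation at the price of composing the (still sub-quadratic, still uniformly bounded) potential with a rotation --- this is precisely how the paper's appendix disposes of the rotation term, and it would shorten your commutator bookkeeping. Second, your aside that $C_{k,d}$ mirrors the exponential rate of Lemma~\ref{lem:dyn} is slightly misleading: the matrix $F(t)$ in your closed commutator relation is \emph{bounded} in $t$, and it is $\|F\|_{L^\infty_t}$, not any exponential growth of the classical trajectory, that enters the Gronwall constant; the analogy with the classical flow is that both exponential rates are controlled by $\|\nabla^2V\|_{L^\infty}$ and $|\Omega|$. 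The regularization issue you flag at the end is real but routine, and your density/weak-continuity argument for $v\in C(\R,\Sigma^k)$ should be supplemented by the continuity of $t\mapsto\|v(t)\|_{\Sigma^k}$, which your energy identity already provides.
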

    
   We can now state the main approximation result of this section.
    \begin{proposition}[Linear wave-packets with rotation]
    	Let $d=2$ or $3$, $v_0\in \Sigma^3$ and $V$ satisfy Assumption \ref{asm1}. Consider the semiclassical wave-packet given by
	\begin{equation*}\label{varphi}
    	\varphi^{\e}(t,x)=\e^{-d/4}v\Big(t,\frac{x-q(t)}{\sqrt{\e}}\Big)e^{i(S(t)+p(t)\cdot(x-q(t)))/\e},
    \end{equation*}
    where $v \in C(\R, \Sigma^3)$ is a solution to \eqref{vlnls}, $S(t)$ is the action defined in \eqref{act} and $(q,p)\in C^{\infty}(\R;\R^d)^2$ solve the 
    Hamiltonian equations \eqref{traj}. Then, there exists a constant $C>0$ and independent of $\e\in (0,1]$, such that 
    	\begin{equation*}
    		\|\psi^{\e}(t,\cdot)-\varphi^{\e}(t,\cdot)\|_{L^2(\R^d)}\lesssim \sqrt{\e}e^{Ct}.
    	\end{equation*}
    	In particular, there exists $c>0$ independent of $\e$, such that, as $\eps \to 0:$
    	\begin{equation*}
    		\sup_{0\le t\le c\log\frac1\e}\|\psi^{\e}(t,\cdot)-\varphi^{\e}(t,\cdot)\|_{L^2(\R^d)}\longrightarrow 0.
        \end{equation*}
    \end{proposition}
    
    \begin{remark} This result can be seen as a special case of the one derived in \cite{CoRo} for general sub-quadratic Hamiltonians. However, 
    given the physical relevance of the case with angular momentum rotation term, we shall give a complete proof below. In addition, we 
    shall explicitly clarify the connection between the cases with and without angular momentum in Corollary \ref{GWP}.
    \end{remark}
    
    \begin{proof}
    	The proof follows along the same as in \cite{CaFe}. We first notice that since $V$ is smooth and sub-quadratic, a Taylor-expansion shows 
	\begin{equation}\label{eq:delta}
	|\Delta_V^\eps(t,y)| = \Big|\mathcal{V}^\e(t,y)-\tfrac12y\cdot  Q_V(t)y\Big|\lesssim  \sqrt{\e}|y|^3.
	\end{equation}
    	We can define the error term $r^{\e}(t,y)=u^{\e}(t,y)-v(t,y)$, and obtain that $r^{\e}$ solves the following equation
    	\begin{equation}\label{error}
    		i\partial_{t}r^{\e}=-\frac12 \Delta r^{\e}+(\Omega\cdot L)r^{\e}+\mathcal{V}^\e(t,y)r^{\e}+\Big(\mathcal{V}^\e(t,y)-\frac12 y\cdot  Q_V(t)y\Big)v,
    	\end{equation}
	with vanishing initial data $r^{\e}(0,y)=0$. Since the right hand side of \eqref{error} is given by self-adjoint operators acting on $r^\eps$ plus a source term, a standard energy estimate shows that 
    	\[
	\|r^{\e}(t,\cdot)\|_{L^2(\R^d)}\le \int_0^t |\Delta^\eps_V(\tau, y)| \, d\tau \lesssim \sqrt{\e}\int_{0}^t \||y|^3v(\tau,\cdot)\|_{L^2(\R^d)}\,.
	\]
	We note that 
	\[ \| |y|^3v(\tau,\cdot)\|_{L^2(\R^d)} \le \| v (\tau, \cdot) \|_{\Sigma^3} \lesssim e^{C\tau}, \quad \forall \tau \in \R,
	\]
	in view of Lemma \ref{lem:ex}. We therefore obtain
	\[
	\|r^{\e}(t,\cdot)\|_{L^2(\R^d)} \lesssim \sqrt{\e}e^{Ct}.
	\]
    	The result then follows from the fact that the wave-packet rescaling \eqref{eq:resc} leaves the $L^2$-norm invariant, and thus
	\[
	\|\psi^{\e}(t,\cdot)-\varphi^{\e}(t,\cdot)\|_{L^2(\R^d)} = \|u^{\e}(t,\cdot)-v(t,\cdot)\|_{L^2(\R^d)}\equiv \|r^{\e}(t,\cdot)\|_{L^2(\R^d)}\lesssim \sqrt{\e}e^{Ct}.
	\]
    \end{proof}
 
  \begin{remark}
  The time-scale $t\sim \mathcal O(\log\frac1\e)$ is called the Ehrenfest-time. It is known to be the longest possible time-scale until which one can hope to establish an 
  effective semi-classical approximation, in general, cf. \cite{CoRo, SVT}. Under stronger assumptions on $v_0$ it is possible to generalize the above 
  approximation result to hold in stronger $\Sigma^k$-norms up to Ehrenfest-time.
  \end{remark}

   A particular class of global solutions $v$ to \eqref{vlnls} is obtained for (complex-valued) Gaussian initial data. More precisely, by following the ideas of Hagedorn \cite{Ha}, we consider initial data of the form
    \begin{equation}\label{gauss}
    	v_0(y)=\frac{1}{(\det A_0)^{1/2}}\exp\Big(-\frac{1}{2}y\cdot  \big(B_0A_{0}^{-1}\big)y\Big),
    \end{equation}
    where the matrices $A_0$ and $B_0$ satisfy the following properties:
    \begin{equation}\label{mat}
    \begin{aligned}
    	&A_0 \text{ and } B_0 \text{ are invertible;} \\
    	&\text{$B_0A_{0}^{-1}$ is symmetric; $B_{0}A_{0}^{-1}=M_1+iM_2$, with $M_j$ symmetric;}\\
        &\text{$\re  \,B_{0}A_{0}^{-1}$ is strictly positive definite;}\\
        &\text{$(\re \, B_{0}A_{0}^{-1})^{-1}=A_{0}A_{0}^*$.}
    \end{aligned}
    \end{equation}
   We shall now show that such Gaussian wave packets are indeed propagated by equation \eqref{vlnls}.
    \begin{corollary}[Gaussian wave-packets]\label{GWP}
    	Let $v \in C(\R, \Sigma^k)$ be the solution to \eqref{vlnls}, with $v_0$ given by \eqref{gauss}--\eqref{mat}. Then for all time $t\in \R$, $v$ is given by 
    	\begin{equation}\label{gaussv}
    	      v(t,y)=\frac{1}{\big(\det A(t)\big)^{1/2}}\exp\Big(-\frac{1}{2}y\cdot \big(B(t)A(t)^{-1}\big)y\Big),
    	\end{equation}
    	provided $A(t)$ and $B(t)$ solve the following ordinary differential equations
    	\begin{equation}\label{ode}
    		\begin{cases}
    			\dot{A}(t)=iB(t)-[R_{\Omega},A(t)];\quad A(0)=A_0,\\
    			\dot{B}(t)=iQ_V(t)A(t)-[R_{\Omega},B(t)];\quad B(0)=B_0,
    		\end{cases}
    	\end{equation}
    	where $R_{\Omega}$ is a skewed-symmetric matrix, given by 
    	$$R_{\Omega}=
    	\begin{bmatrix}
    		0 & \Omega_{3} & -\Omega_2\\
    		-\Omega_{3} & 0 & \Omega_1\\
    		\Omega_2 & -\Omega_1 & 0 \\
    	\end{bmatrix}.$$
    	In addition, \eqref{ode} guarantees that $A(t)$ and $B(t)$ satisfy \eqref{mat} for all $t\in \R$.
    \end{corollary}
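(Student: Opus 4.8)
The plan is to insert the Gaussian ansatz \eqref{gaussv} into the amplitude equation \eqref{vlnls} and to exploit the fact that a purely quadratic Hamiltonian propagates Gaussians. Writing $v(t,y)=c(t)\exp(-\tfrac12 y\cdot\Gamma(t)y)$ with $c=(\det A)^{-1/2}$ and $\Gamma=BA^{-1}$, I would compute the action of each operator on such a $v$: one has $-\tfrac12\Delta v=(\tfrac12\Tr\Gamma-\tfrac12 y\cdot\Gamma^2 y)v$, the potential contributes $\tfrac12(y\cdot Q_V y)v$, and for the rotation term I would first rewrite $\Omega\cdot L=-i(\Omega\wedge y)\cdot\nabla=i(R_\Omega y)\cdot\nabla$, using the vector identity $\Omega\cdot(y\wedge\nabla)=(\Omega\wedge y)\cdot\nabla$ together with $\Omega\wedge y=-R_\Omega y$ and $R_\Omega^T=-R_\Omega$. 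Acting on the Gaussian this yields $(\Omega\cdot L)v=\tfrac{i}{2}(y\cdot[R_\Omega,\Gamma]y)v$, the commutator arising because only the symmetric part of $R_\Omega\Gamma$ survives in the quadratic form. Matching the quadratic forms and the ($y$-independent) prefactors separately then turns \eqref{vlnls} into the scalar relation $i\dot c/c=\tfrac12\Tr\Gamma$ and the matrix Riccati equation $\dot\Gamma=-i\Gamma^2+iQ_V-[R_\Omega,\Gamma]$.

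The second step is to check that the linear system \eqref{ode} reproduces these two relations. Differentiating $\Gamma=BA^{-1}$ with $\tfrac{d}{dt}A^{-1}=-A^{-1}\dot A A^{-1}$ and substituting \eqref{ode}, I expect the terms $BA^{-1}R_\Omega$ coming from $\dot B A^{-1}$ and from $-BA^{-1}\dot A A^{-1}$ to cancel, leaving exactly $\dot\Gamma=-i\Gamma^2+iQ_V-[R_\Omega,\Gamma]$; this is the one computation where the commutator bookkeeping must be done carefully. For the prefactor, $\tfrac{d}{dt}\log\det A=\Tr(A^{-1}\dot A)=i\Tr(A^{-1}B)-\Tr(A^{-1}[R_\Omega,A])$, and the last trace vanishes by cyclicity, so $\dot c/c=-\tfrac{i}{2}\Tr\Gamma$, i.e. $i\dot c/c=\tfrac12\Tr\Gamma$, as required. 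Global existence of $(A,B)$ is immediate since \eqref{ode} is linear with coefficients that are continuous (indeed smooth and bounded by Assumption \ref{asm1}) in $t$.

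The main work, and the expected obstacle, is the propagation of the algebraic constraints \eqref{mat}. I would track the two matrices $W_1=A^TB-B^TA$ and $W_3=A^*B+B^*A$. A direct differentiation using \eqref{ode} shows that in both cases the quadratic-in-$(A,B)$ contributions (the $\Tr$- and $Q_V$-terms) cancel and that the rotation terms assemble into a single commutator, giving $\dot W_1=-[R_\Omega,W_1]$ and $\dot W_3=-[R_\Omega,W_3]$. Hence $W_j(t)=e^{-tR_\Omega}W_j(0)e^{tR_\Omega}$, and since $R_\Omega$ is skew-symmetric, $e^{tR_\Omega}$ is orthogonal. The hypotheses \eqref{mat} are precisely $W_1(0)=0$ and $W_3(0)=2I$; the latter equivalence, $A_0^*B_0+B_0^*A_0=2I\Leftrightarrow(\re B_0A_0^{-1})^{-1}=A_0A_0^*$, I would record as a short lemma obtained by multiplying $\re(BA^{-1})=(AA^*)^{-1}$ on the left by $A^*$ and on the right by $A$. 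Conjugation by the orthogonal $e^{tR_\Omega}$ then preserves both, so $W_1(t)\equiv0$ and $W_3(t)\equiv2I$ for all $t$. From $W_1\equiv0$ I obtain that $\Gamma=BA^{-1}$ is symmetric; from $W_3\equiv2I$ the invertibility of $A$ follows by a kernel argument (if $Av=0$ then $v^*W_3v=0=2|v|^2$ forces $v=0$), which in turn gives $\re\Gamma=(AA^*)^{-1}>0$ together with the identity $(\re BA^{-1})^{-1}=AA^*$; finally $\Gamma$ is invertible because $\re\Gamma>0$ forces $\re(v^*\Gamma v)\neq0$ for $v\neq0$, whence $B=\Gamma A$ is invertible as well.

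Conceptually, the reason the rotation enters only through the conjugation $-[R_\Omega,\cdot]$ is that the substitution $\widetilde A=e^{tR_\Omega}Ae^{-tR_\Omega}$, $\widetilde B=e^{tR_\Omega}Be^{-tR_\Omega}$ removes the commutators from \eqref{ode} and reduces the system to the classical rotation-free Hagedorn equations $\dot{\widetilde A}=i\widetilde B$, $\dot{\widetilde B}=i\widetilde Q_V\widetilde A$ with $\widetilde Q_V=e^{tR_\Omega}Q_Ve^{-tR_\Omega}$ still real symmetric; orthogonality of $e^{tR_\Omega}$ then transports the standard invariants back to $A,B$. I would include this as a remark, since it both motivates the choice of $W_1,W_3$ and offers an alternative route to the preservation of \eqref{mat}.
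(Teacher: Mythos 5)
Your proposal is correct and follows essentially the same route as the paper: insert the Gaussian ansatz into \eqref{vlnls}, match the trace part and the quadratic-form part to obtain \eqref{ode}, and propagate the constraints \eqref{mat} by tracking two quadratic invariants, which are exactly the paper's $F=A^*B+B^*A$ and $G=A^\top B-B^\top A$ (your $W_3$ and $W_1$). The one place where you genuinely improve on the paper's bookkeeping is the propagation step: the paper asserts $\dot F=\dot G=0$ outright, whereas the algebraically correct identities are the ones you derive, $\dot F=-[R_\Omega,F]$ and $\dot G=-[R_\Omega,G]$ (the rotation terms assemble into a commutator rather than cancel identically); the paper's claim is only true a posteriori, because $F\equiv 2\mathbb I$ and $G\equiv 0$ commute with $R_\Omega$. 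Your conjugation argument $W_j(t)=e^{-tR_\Omega}W_j(0)e^{tR_\Omega}$ closes that small gap cleanly. The remaining differences are cosmetic: you phrase the quadratic-form matching as a Riccati equation for $\Gamma=BA^{-1}$ and then check that the linear system \eqref{ode} implies it, while the paper works directly with $A$ and $B$ and makes explicit the trace-free freedom $\Lambda$ in splitting off the $A$-equation (which is where the choice of commutator is motivated); and your closing observation that conjugating by $e^{tR_\Omega}$ reduces \eqref{ode} to the rotation-free Hagedorn system is precisely the paper's own remark following the corollary.
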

 
    \begin{proof}
    	We first assume that $A$ and $B$ satisfy \eqref{mat} for all $t\in \R$ and plug the Gaussian ansatz \eqref{gaussv} into \eqref{vlnls}. After another lengthy computation we find 
	that $v$ solves \eqref{vlnls}, if and only if the matrices $A$ and $B$ satisfy:
    	\begin{align*}
    		&\Tr(i\dot{A}A^{-1}+BA^{-1})\,+\\
    		&\, y^\top \Big(i\dot{B}A^{-1}-iBA^{-1}\dot{A}A^{-1}-BA^{-1}BA^{-1}+Q_{V}+2iR_{\Omega}BA^{-1}\Big)y=0.
    	\end{align*} 
        In a first step, this implies
        \begin{equation}\label{Aeq}
        	i\dot{A}A^{-1}+BA^{-1}+\Lambda=0,
        \end{equation}
        where $\Lambda$ is any matrix such that $\Tr(\Lambda)=0$. By choosing $\Lambda=i[R_{\Omega},\,A]A^{-1}$ this fact is guaranteed and we directly 
        obtain the first equation of \eqref{ode}. Using the right hand side of this 
        equation as the new expression for $\dot{A}$, we find, in a second step, the following condition for $B$:
        \begin{equation}\label{Beq}
        y^\top \Big(\dot{B}-iQ_{V}A+[R_{\Omega},\,B]\Big)A^{-1}y +y^\top  \Big(BA^{-1}R_{\Omega}+R_{\Omega}BA^{-1}\Big)y =0.
        \end{equation}
    	In fact, since 
    	\begin{align*}
    		 y^\top  R_{\Omega}BA^{-1}y =-(BA^{-1}R_{\Omega}y)^\top y =- y^\top BA^{-1}R_{\Omega}y ,
    	\end{align*}
    	we have $y^{\top}\big(BA^{-1}R_{\Omega}+R_{\Omega}BA^{-1}\big)y=0$, which means that \eqref{Beq} simplifies to 
	\[
	y^\top\Big(\dot{B}-iQ_{V}A+[R_{\Omega},\,B]\Big)A^{-1}y=0,
	\]
	which is guaranteed to hold, provided $B$ satisfies the second equation of \eqref{ode}.
	
	    To prove that $A(t)$ and $B(t)$ satisfy \eqref{mat}, we emplot the same argument as in \cite[Lemma 2.1]{Ha}: We first define two functions
    	$$F(t):=A^*(t)B(t)+B^*(t)A(t), \quad G(t):=A^\top(t)B(t)-B^\top(t)A(t).$$
    	and note that 
    	\begin{align*}
    		\dot{F}(t)=& \, (iB-[R_{\Omega},A])^{*}B+A^*(iQA-[R_{\Omega}, B])\\
    		&\, +(iQA-[R_{\Omega}, B])^*A+B^*(iB-[R_{\Omega},A])=0,
    	\end{align*}
    	as $R_{\Omega}$ is skewed-symmetric and $Q_V(t)$ is symmetric. Hence,
    	\begin{align*}
    		F(t)&=F(0)=A^*_0B_0+B^*_0A_0=A_0^*\Big(B_0A_0^{-1}+(A_0^{*})^{-1}B_0^*\Big)A_0\\
    		&=A_0^*\Big(2\re(B_0A_0^{-1})\Big)A_0=2A_0^*\Big((A_0A_0^*)^{-1}\Big)A_0=2\mathbb I.
    	\end{align*}
    	For any $z\in \C$, we thus have
    	$$\langle z, \, z\rangle=\frac12\langle z,\, F(t)z\rangle=\frac12\langle A(t)z,\, B(t)z\rangle+\frac12 \langle B(t)z,\, A(t)z\rangle, $$
    	which equals zero only if $z=0$. Thus $\ker{A(t)}=\ker{B(t)}=\{0\}$, i.e. $A(t)$ and $B(t)$ are invertible. 
	
	Similarly, we infer that $\dot{G}(t)=0$, and thus $G(t) = G(0)$, where
    	\begin{align*} G(0)&=A_{0}^{\top}\Big(B_{0}A_{0}^{-1}-(A_{0}^{\top})^{-1}B^{\top}_{0}\Big)A_{0}\\
    		&=A_{0}^{\top}\Big(B_{0}A_{0}^{-1}-(B_{0}A_{0}^{-1})^{\top}\Big)A_0=0, 
    	\end{align*}
	since $B_{0}A_{0}^{-1}$ is symmetric. 
        Hence, $A^\top(t)B(t)=B^\top(t)A(t)$, which shows that $B(t)A(t)^{-1}$ is symmetric. Finally, since $F(t)=2\mathbb I$, we have
    	\[
    		2 \mathbb{I}= A^*\Big(BA^{-1}+(A^*)^{-1}B^*\Big)A= A^*\Big(BA^{-1}+\big(BA^{-1}\big)^{*}\Big)A = 2A^*\re\big(BA^{-1}\big)A,
         \]
         and thus $\re\big(BA^{-1}\big)=(AA^*)^{-1}$. This also proves that $\re(BA^{-1})$ is strictly positive definite.
     	\end{proof}
	
    	\begin{remark}
    		One might wonder, why we chose a commutator in equation \eqref{Aeq}, when any other $\Lambda$ with trace equal to zero would also be a possibility. However, the commutators are a natural choice in view of the following fact:
		Let 
		$${A}_\Omega(t)=e^{tR_{\Omega}}A(t)e^{-tR_{\Omega}},\quad {B}_\Omega(t)=e^{tR_{\Omega}}B(t)e^{-tR_{\Omega}},$$
		be two new matrices matrices obtained by conjugating $A$ and $B$ with time-dependent rotation matrices. Then one checks that $A_\Omega$, $B_\Omega$ solve 
		\begin{equation} \label{Hode}   		
    			\begin{cases}
    				\dot{A}_\Omega(t)=i B_\Omega(t)\\
    				\dot{B}_\Omega(t)=i Q_{V, \Omega}(t) A_\Omega(t),
    			\end{cases} 
    		\end{equation}
		which implies that $A_\Omega$, $B_\Omega$ also have all the properties \eqref{mat}. The system \eqref{Hode} is identical to 
		the one originally derived by Hagedorn, provided ${Q}_{V, \Omega} = Q_V$. The latter is true
		for potentials $V$ which are symmetric with respect to the rotation axis $\Omega$, since in this case $[R_\Omega, V]=0$. This reflects 
		the well-known fact that solutions $v$ to (nonlinear) Schr\"odinger equations with 
		angular momentum term are related to solutions $\tilde v$ of the same equation but without angular momentum term, via the following time-dependent unitary transformation
		\begin{equation}\label{eq:tchange}
		{v}_\Omega(t,y)=e^{it\Omega\cdot L}v(t,y)=v\big(t,e^{tR_{\Omega}}y\big),
		\end{equation}
		see \cite{AnMaSp, ArNeSp} for more details. Acting with with change of variables onto the Gaussian ansatz \eqref{gaussv}, one can see that the latter remains Gaussian, provided 
		$A$ and $B$ are replaced by $A_\Omega$ and $B_\Omega$, respectively. 
	   	\end{remark}

    \medskip

\section{Extension to the weakly nonlinear case}\label{sec:nonlinear}

In this section, we shall show how to extend the construction of semi-classical wave packets to the case of weakly nonlinear Schr\"odinger equations with rotation. 
We thereby follow the ideas of \cite{CaFe} and only consider the critical case, where $\alpha = 1+\frac{d}{2}$. We consequently are interested in 
\begin{equation}\label{eq:nls}
    	i\e\partial_{t} \psi^{\e}=-\frac{\e^2}{2}\Delta \psi^{\e}+V(x)\psi^{\e}+\lambda \eps^{1+d/2} |\psi^\eps|^2 \psi^{\e}+\e(\Omega\cdot L)\psi^{\e}, \quad \psi^{\e}_{\vert{t=0}}=\psi_{0}^{\e},
\end{equation}
for $d=2$ or $3$, and initial data $\psi_0^\eps$ given in the form \eqref{eq:ini}. Rewriting the unknown $\psi^\eps$ in terms of $u^\eps$ as given by \eqref{eq:resc}, we notice that 
$|\psi^\eps|^2  \sim \eps^{-d/2}$ and thus, by following the same steps as in the linear case, we (formally) 
arrive at the corresponding amplitude equation with cubic nonlinearity, i.e.
 \begin{equation}\label{vnls}
    	i\partial_{t}v=-\frac{1}{2}\Delta v+\frac12 \big(y\cdot Q_V(t)y\big) v + \lambda |v|^2 v +(\Omega\cdot L)v, \quad v_{\vert{t=0}}=v_0.
\end{equation}
\begin{remark}
If we had chosen a subcritical $\alpha >  1+\frac{d}{2}$, the nonlinearity would not appear in \eqref{vnls}, and thus the situation is very similar to the one in our previous section.
The supercritical case $\alpha < 1+\frac{d}{2}$, however, is much more involved and the only rigorous results available to date are for the case of (nonlocal) Hartree nonlinearities, cf. \cite{CC}.
\end{remark}

Equation \eqref{vnls} falls within the class of models studied in \cite{AnMaSp}, and local existence of solutions is guaranteed for smooth initial data. More precisely, we have:

\begin{lemma}[Local Existence]\label{lem:maxexist}
  Let $v_0\in \Sigma^k$ with $k>d/2$. There exists $T_{\rm crit}\in (0,+\infty]$
  and a unique maximal solution $v\in C([0,T_{\rm crit});\Sigma^k)$ to
  \eqref{vnls}, such that $\| v(t,\cdot) \|_{L^2} = \| v_0
  \|_{L^2}$. The solution is maximal  
  in the sense that if $T_{\rm crit}<\infty$, then
  \begin{equation*}
    \lim_{t\to T_{\rm crit}}\|v(t,\cdot )\|_{\Sigma^k}=+\infty. 
  \end{equation*}
\end{lemma}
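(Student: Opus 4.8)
The plan is to solve \eqref{vnls} by a standard contraction-mapping argument applied to its mild (Duhamel) formulation, using the linear propagator associated with the $\eps$-independent Hamiltonian of Lemma \ref{lem:ex}. Let $U(t,s)$ denote the two-parameter propagator generated by the time-dependent linear operator $-\tfrac12\Delta + \tfrac12 y\cdot Q_V(t)y + \Omega\cdot L$; its existence, together with its boundedness on each $\Sigma^k$ with at most exponential growth in $t$, is exactly the content of \cite{Ki} that underlies Lemma \ref{lem:ex}. A solution to \eqref{vnls} is then a fixed point of
\[
\Phi(v)(t) = U(t,0)v_0 - i\lambda \int_0^t U(t,s)\big(|v|^2 v\big)(s)\, ds.
\]
Before setting up the fixed point, one may, if desired, gauge away the rotation by means of the unitary change of variables \eqref{eq:tchange}: since $e^{tR_{\Omega}}$ is orthogonal, it preserves all $\Sigma^k$ norms, leaves $-\Delta$ and the cubic term $|v|^2v$ invariant, and turns $Q_V(t)$ into a conjugated, still smooth and bounded, symmetric matrix. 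This reduces \eqref{vnls} to a rotation-free cubic Schr\"odinger equation with smooth sub-quadratic time-dependent potential, which falls precisely within the class treated in \cite{AnMaSp}.

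For the contraction argument I would work in the complete metric space $X_T = C([0,T];\Sigma^k)$, on a closed ball of radius $2\,\|U(\cdot,0)v_0\|_{X_T}$ around the free evolution, with $T$ to be fixed small. The crucial analytic input is that $\Sigma^k$ is a Banach algebra for $k>d/2$: combining the Sobolev embedding $\Sigma^k\hookrightarrow H^k \hookrightarrow L^\infty$ with Leibniz expansion and tame (Moser-type) estimates, one distributes the polynomial weights and derivatives across the three factors and obtains
\[
\big\||v|^2v\big\|_{\Sigma^k}\lesssim \|v\|_{\Sigma^k}^3, \qquad
\big\||v|^2v-|w|^2w\big\|_{\Sigma^k}\lesssim \big(\|v\|_{\Sigma^k}^2+\|w\|_{\Sigma^k}^2\big)\|v-w\|_{\Sigma^k}.
\]
Since $U(t,s)$ is bounded on $\Sigma^k$ uniformly for $s,t$ in a compact interval, the Duhamel integral gains a factor $T$, so for $T$ sufficiently small (depending only on $\|v_0\|_{\Sigma^k}$) the map $\Phi$ sends the ball into itself and is a contraction there. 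The Banach fixed-point theorem then yields a unique local solution $v\in C([0,T];\Sigma^k)$.

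The conserved mass follows by pairing \eqref{vnls} with $\bar v$ in $L^2$ and taking the imaginary part: the operators $-\tfrac12\Delta$, $\tfrac12 y\cdot Q_V(t)y$ and $\Omega\cdot L$ are (formally) self-adjoint and hence contribute real quantities, while $\lambda\int_{\R^d}|v|^2 v\,\bar v = \lambda\int_{\R^d}|v|^4$ is real as well, so that $\frac{d}{dt}\|v(t)\|_{L^2}^2=0$, which at $\Sigma^k$ regularity is readily made rigorous. Finally, the maximal solution and the blow-up alternative are obtained in the usual way: because the local existence time $T$ can be bounded below solely in terms of $\|v_0\|_{\Sigma^k}$, the solution may be repeatedly restarted, and if $T_{\rm crit}<\infty$ while $\|v(t)\|_{\Sigma^k}$ remained bounded as $t\to T_{\rm crit}$, one could extend past $T_{\rm crit}$, contradicting maximality. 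I expect the main obstacle to be purely the boundedness of the propagator $U(t,s)$ on $\Sigma^k$ (as opposed to merely $L^2$) together with the weighted algebra estimate above; both are where the sub-quadratic structure of $V$ and the skew-symmetry of $R_{\Omega}$ enter, and both are available either directly from \cite{Ki} or, after the reduction \eqref{eq:tchange}, from \cite{AnMaSp}.
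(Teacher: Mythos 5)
Your argument is correct, but note that the paper does not actually prove this lemma: it is quoted as falling ``within the class of models studied in \cite{AnMaSp}'', so the only comparison to be made is with that reference. There the local theory is developed at the level of the energy space $\Sigma^1$, where $H^1(\R^3)$ is not an algebra, and the contraction is therefore run in Strichartz spaces built from the dispersive estimates for the rotating linear propagator. You instead exploit the hypothesis $k>d/2$, which gives $\Sigma^k\hookrightarrow H^k\hookrightarrow L^\infty$ and the tame product estimate $\||v|^2v\|_{\Sigma^k}\lesssim\|v\|_{\Sigma^k}^3$, so that the Duhamel term gains a plain factor of $T$ and no Strichartz machinery is needed; the only linear input is the boundedness of $U(t,s)$ on $\Sigma^k$, locally uniformly in time, which is exactly what Lemma \ref{lem:ex} (i.e.\ \cite{Ki}) supplies, either directly or after the gauge transformation \eqref{eq:tchange} (which, being an orthogonal change of variables, preserves all $\Sigma^k$ norms and commutes with the cubic nonlinearity). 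This route is more elementary and self-contained at the stated regularity; the price is that it would not cover rough data $v_0\in\Sigma^1$ in $d=3$, which the Strichartz-based approach of \cite{AnMaSp} does. Two small points to make rigorous: the weighted Moser estimate requires placing the monomial $x^\alpha$ on the factor carrying the highest-order derivative and controlling the remaining factors in $L^\infty$ via $H^{k}\hookrightarrow L^\infty$, which is where $k>d/2$ is genuinely used; and for the blow-up alternative one should note that the local existence time depends on $\|v_0\|_{\Sigma^k}$ only through the (locally uniform in time) operator norm of $U(t,s)$ on $\Sigma^k$, so the restart argument goes through on any compact subinterval of $[0,T_{\rm crit})$.
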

\begin{remark} 
In general, $T_{\rm crit}<+\infty$, in particular in the focusing case $\lambda <0$ where the appearance of finite-time blow-up is a possibility, see \cite{AnMaSp}. 
The change of variable \eqref{eq:tchange} allows one to map solutions to \eqref{vnls} onto solutions of NLS without rotation term, but with time-dependent, 
sub-quadratic potentials, for which the long time behavior is studied in \cite{Carles}.
\end{remark}

Our main result in this section is as follows:

\begin{theorem}[Weakly nonlinear wave packets with rotation]
Let $d=2$ or $3$, $v_0\in \Sigma^3$ and $V$ satisfy Assumption \ref{asm1}. Let $S$ be the classical action \eqref{act} and 
\[
\varphi^{\e}(t,x)=\e^{-d/4}v\Big(t,\frac{x-q(t)}{\sqrt{\e}}\Big)e^{i(S(t)+p(t)\cdot(x-q(t)))/\e}
\]
be a semiclassical wave-packet concentrated, as before, along the trajectories \eqref{traj}, but 
with an amplitude $v\in C([0,T_{\rm crit});\Sigma^3)$ given by the maximal solution to the nonlinear equation \eqref{vnls}. Then, for any $T<T_{\rm crit}$, we have
\begin{equation*}
    		\sup_{0\le t\le T}\|\psi^{\e}(t,\cdot)-\varphi^{\e}(t,\cdot)\|_{L^2(\R^d)}\lesssim \sqrt{\eps}.
        \end{equation*}
\end{theorem}

Even in cases where the maximal life-span of solutions to \eqref{vnls} is $T_{\rm crit} = +\infty$, it is 
not clear whether this nonlinear approximation result extends up to Ehrenfest times $t\sim \mathcal O( \log \frac{1}{\eps})$. 
With considerably more effort, however, it was shown in \cite{CaFe} that time-scales of order $t\sim \mathcal O(\log \log \frac{1}{\eps})$ 
can be reached in the critical case. Here, we only treat the case of finite, macroscopic times $t\sim \mathcal O(1)$, in the interest of 
giving a short and not too technical proof which relates to the linear case in a transparent way.

\begin{proof}
    As in the linear case, we denote the remainder by $r^{\e}(t,y)=u^{\e}(t,y)-v(t,y)$, and first note that the unknown $u^{\e}$ defined via \eqref{eq:resc} solves
    \begin{equation}
	    i\partial_{t}u^{\e}=-\frac{1}{2}\Delta u^{\e}+\mathcal V^{\e}(t,y)u^{\e}+(\Omega\cdot L)u^{\e}+\lambda|u^{\e}|^2u^{\e},\quad u^{\e}_{\vert{t=0}}=v_0,
    \end{equation}
    where $\mathcal V^{\e}$ is given by \eqref{hess}. We recall the definition of $\Delta^{\e}_{V}(t,y)$ given by \eqref{eq:delta} and 
    consequently infer that $r^{\e}$ is the solution to
	\begin{equation}\label{eq:rnls}
		i\partial_{t}r^{\e}=-\frac{1}{2}\Delta r^{\e}+\mathcal{V}^\e (t,y) r^{\e}+(\Omega\cdot L)r^{\e}+\Delta^{\e}_{V}(t,y)v+\lambda \Big(|u^{\e}|^2u^{\e}-|v|^2v\Big),
	\end{equation}
	subject to initial data $r^{\e}(0,y)=0$.
	 
	Next, we denote by $U_\Omega^\eps(t, s)$ the $L^2$-unitary operator furnishing the Schr\"odinger dynamics associated to the time-dependent Hamiltonian 
	\begin{equation}\label{eq:hamo}
	H_\Omega^\eps(t)=-\frac{1}{2}\Delta+\mathcal{V}^\e(t,x)+(\Omega\cdot L).
	\end{equation} 
	By applying Duhamel's formula to \eqref{eq:rnls}, we obtain
	\begin{align*}
		r^{\e}(t+\tau)=&\, U_\Omega^\eps(\tau, t)r^{\e}(t)-i\int_{t}^{t+\tau}U_\Omega^\eps(t+\tau, s)\Delta^{\e}_{V}v(s)\,ds\\
		&\, -i\lambda\int_{t}^{t+\tau}U_\Omega^\eps(t+\tau, s)\Big(|u^{\e}|^2u^{\e}-|v|^2v\Big)(s)\,ds.
	\end{align*}
	In view of the results described in the Appendix, the propagator $U_\Omega^\eps(t,s)$ allows for $\eps$-independent local in-time dispersive estimates. 
	Recall from \cite{KeTa} that $(q,r)$ is an admissible Strichartz-pair associated to the space-time norm $L^q_tL^r_x$, if $2\le r\le \frac{2d}{d-2}\,(\text{resp. }2\le r< \infty \text{ if $d=2$}),$ and 
	$$\frac{2}{q}=d\Big(\frac{1}{2}-\frac{1}{r}\Big).$$
	Define $I=[t,t+\tau]$, with $t\geq 0$, $\tau>0$, and let
	$$q=\frac8d,\quad r=4,$$
	such that $(q,r)$ is admissible. In addition, we put 
	$$q'=\frac{8}{8-d}, \quad r'=\frac43$$
	being the H\"older conjugates of $(q,r)$. The Strichartz estimates derived in \cite{KeTa} then imply
	\begin{equation*}
		\|r^{\e}\|_{L^{8/d}(I;L^{4})}\lesssim \|r^{\e}\|_{L^2}+\|\Delta^{\e}_{V}v\|_{L^1(I;L^2)}+\norm{|u^{\e}|^2u^{\e}-|v|^2v}_{L^{8/(8-d)}(I;L^{4/3})}. 
	\end{equation*}
	For the last term, we have the following pointwise estimate
	\begin{equation}\label{ptwise}
		\Big||u^{\e}|^2u^{\e}-|v|^2v\Big|\lesssim |r^{\e}|\Big(|r^{\e}|^2+|v|^2\Big).
	\end{equation}
	By H\"older's inequality,
	\begin{equation}\label{ineq1}
	    \begin{aligned}
		   \|r^{\e}\|_{L^{8/d}(I;L^{4})}\lesssim \, &\|r^{\e}\|_{L^2}+\|\Delta^{\e}_{V}v\|_{L^1(I;L^2)}\\
		   &+\Big(\norm{r^{\e}}^2_{L^{8/(4-d)}(I;L^{4})}+\norm{v}^2_{L^{8/(4-d)}(I;L^{4})}\Big)\|r^{\e}\|_{L^{8/d}(I;L^{4})}.
	    \end{aligned}
	\end{equation}
	Since amplitude functions $u^{\e}$ and $v$ solve evolutionary equations within the same class of nonlinear Schr\"odinger type models with smooth and sub-quadratic potentials, 
	Lemma \ref{lem:maxexist} yields that both $u^{\e},v\in C([0,T];\Sigma^{k})$. Hence, we have 
	$$\|Pu^{\e}\|_{L^{\infty}([0,T];L^2)}+\|Pv\|_{L^{\infty}([0,T];L^2)}\le C(T),$$
	for any operator $P\in \{{\rm Id},\, \nabla,\, x\}$. 
	
	Next, we recall the Gagliardo-Nirenberg inequality, i.e.,
	\begin{equation*}
		\|f\|_{L^{4}(\R^{d})}\lesssim \|f\|^{1-d/4}_{L^2(\R^d)}\|\nabla f\|^{d/4}_{L^2(\R^d)},\quad \forall f\in H^{1}(\R^d).
	\end{equation*}
	Applying this to $v$ yields
	$$\|v\|_{L^4}\lesssim \|v\|_{L^{2}}^{2-d/2}+\|\nabla v\|_{L^2}^{d/2}\le C(T),$$
	and same is true for $u^{\e}$. Hence
	\begin{equation}
		\norm{r^{\e}}^2_{L^{8/(4-d)}(I;L^{4})}+\norm{v}^2_{L^{8/(4-d)}(I;L^{4})}\lesssim C(T)\Big(\int_{t}^{t+\tau}\,ds\Big)^{\frac{4-d}{4}}\lesssim \tau^{(4-d)/4} .
	\end{equation}
	Thus \eqref{ineq1} can be reduced to 
	\begin{equation}\label{ineq2}
		\|r^{\e}\|_{L^{8/d}(I;L^{4})}\lesssim \|r^{\e}\|_{L^2}+\|\Delta^{\e}_{V}v\|_{L^1(I;L^2)}+ \tau^{(4-d)/4}\|r^{\e}\|_{L^{8/d}(I;L^{4})}.
	\end{equation}
	Now, fix $\tau < 1$ to be sufficiently small, and repeat this estimate a finite number of times to cover $[0,T]$. This yields
	\begin{equation}
	      \|r^{\e}\|_{L^{8/d}([0,T];L^4)}\lesssim \|r^{\e}\|_{L^1([0,T];L^2)}+\|\Delta^{\e}_{V}v\|_{L^1([0,T];L^2)}.
	\end{equation}
	
	Next, applying Strichartz estimates again, with a second admissible pair $(q_1,r_1)=(\infty, 2)$ on $J=[0,t]$ for $0\le t\le T$,
	\begin{equation}
		\|r^{\e}\|_{L^{\infty}(J;L^2)}\lesssim \|\Delta^{\e}_{V}v\|_{L^1(J;L^2)}+\norm{|u^{\e}|^2u^{\e}-|v|^2v}_{L^{8/(8-d)}(J;L^{4/3})}.
	\end{equation}
	Using the pointwise estimate \eqref{ptwise} and repeating the steps \eqref{ineq1}--\eqref{ineq2}, we obtain
	\begin{align*}
		\|r^{\e}\|_{L^{\infty}(J;L^2)}&\lesssim \|\Delta^{\e}_{V}v\|_{L^1(J;L^2)}+\|r^{\e}\|_{L^{8/d}(J;L^4)}\\
		&\lesssim \|r^{\e}\|_{L^1(J;L^2)}+\|\Delta^{\e}_{V}v\|_{L^1(J;L^2)}\\
		&\lesssim  \|r^{\e}\|_{L^1(J;L^2)}+ \sqrt{\e}\norm{|y|^3v(t,y)}_{L^{1}(J;L^2)},
	\end{align*}
	where the last inequality follows by Taylor expansion, just like in the linear case. The above estimate is readily observed to be of Gronwall type, which consequently yields
	$$\|r^{\e}(t)\|_{L^2}\lesssim \sqrt{\e}e^{t}, \quad t\in [0,T],$$
	and thus
	\begin{equation*}
	    \sup_{0\le t\le T}\|u^{\e}(t,\cdot)-v(t,\cdot)\|_{L^{2}}\equiv \sup_{0\le t\le T}\|r^{\e}(t,\cdot)\|_{L^{2}}\lesssim \sqrt{\e}.
	\end{equation*}
	By recalling the fact that the wave-packet rescaling \eqref{eq:resc} leaves the $L^2$-norm invariant the proof is complete.
\end{proof}

\bigskip


\appendix

\section{On the existence of Strichartz estimates}\label{sec:strich}

We briefly discuss the existence of $\eps$-independent Strichartz estimates for the propagator $U_\Omega^\eps (t,s)$ associated to the 
Hamiltonian $H^\eps_\Omega(t)$ as given by \eqref{eq:hamo}. To this end, we first consider the case without rotation $\Omega = 0$, i.e., we 
study solutions 
\[
u^\eps(t,x)  = U^\eps(t,s) u_0(x),
\]
to the non-autonomous Schr\"odinger equation
\begin{equation}\label{eq:nona}
i \partial_t u^\eps +\frac{1}{2} \Delta u^\eps = \mathcal{V}^\e(t,x) u^\eps, \quad u^\eps(s,x) = u_0(x),
\end{equation}
where we recall from \eqref{hess} that 
\[
\mathcal{V}^\e(t,y)=\frac{1}{\e}\big(V(q(t)+\sqrt{\e}y)-V(q(t))-\sqrt{\e}y\cdot \nabla V(q(t))\big).
\]
One seeks to construct a strongly continuous map $(t,s) \mapsto U^\eps(t, s)$ which is unitary on $L^2(\R^d)$, and which satisfies $U^\eps(t,t) = {\rm Id}$,
\[
U^\eps(t, \tau)U^\eps(\tau, s) = U^\eps(t, s), \quad {U^\eps(t, s)}^\ast= {U^\eps(t, s)}^{-1}.
\]

Under our hypothesis on the potential $V$, this can be done using the approach developed in \cite{Fu} (see also \cite{Dam} for a detailed revisit of this technique). 
Indeed, Assumption \ref{asm1} implies that $\mathcal{V}^\e$ is a real-valued $C^\infty$-function, such that $\mathcal{V}^\e\in L^2_t L^\infty_{{\rm loc}, x}$ and, for any fixed $t\in \R$, 
$\mathcal{V}^\e(t,\cdot)$ is sub-quadratic in space. A short computation also shows that 
\[
\quad \forall \, |\alpha|= 2 \, : \, \partial_y^\alpha \mathcal{V}^\e (t,y) =  \partial^\alpha_x V(q(t) + \sqrt{\eps} y),
\]
which in view of Lemma \ref{lem:dyn} and Assumption \ref{asm1} implies that for $t\in [0,T]$:
\begin{equation}\label{eq:M}
M:= \| \nabla_y^2 \mathcal{V}^\e \|_{L^2_tL^\infty_x}=  \| \nabla_y^2 {V} \|_{L^2_tL^\infty_x}\le c(T).
\end{equation}
In particular, $M>0$ is $\eps$-independent. 

The propagator $U^\eps(t,s)$ can thus be constructed, using Fujiwara's time-slicing approach, from an associated family of parametrices given by 
\[
E^\eps(t,s) \varphi(y) = \Big( \frac{1}{2\pi i (t-s)} \Big)^{d/2} \int_{\R^d} e^{i S^\eps(t,s,y, z)} \varphi(z) \, dz, \quad \varphi \in \mathcal S(\R^d),
\]
where $S^\eps$ is the associated classical action, cf. \cite[Chapter 1.5]{Dam}. For small enough $\delta > 0$ and $0<|t-s|<\delta$, this oscillatory integral defines 
a bounded operator 
\[
\|E^\eps(t,s) \varphi\|_{L^2}  \le \gamma \| \varphi\|_{L^2},
\] 
where the constant $\gamma >0$ only depends on $\delta$ and $M$ defined in \eqref{eq:M} above. Thus, also $\gamma>0$ is seen to be $\eps$-independent. Taking a partition of the time-interval $[s,t]$ 
and an associated iterated integral operator induced by $E^\eps(t,s)$, one obtains the propagator $\{ U^\eps(t,s)\, : t,s \in [-T,T]\} $ by taking the size of the partition 
step to zero (in an appropriate sense), cf. \cite[Chapter 1.6]{Dam} for full details. In particular, $U^\eps(t,s)$ inherits the dispersive properties of $E^\eps(t,s)$ in the 
sense that 
\[
\| U^\eps(t,s) \varphi \|_{L^\infty(\R^d)} \le \frac{C}{|t-s|^{d/2}}\| \varphi \|_{L^1(\R^d)},
\]
where $C=C(\delta, M)>0$. The general theory developed in \cite{KeTa} shows that this short-time dispersive estimate is sufficient 
to imply the existence of local in-time Strichartz estimates for the propagator $U^\eps(t,s)$, and thus for the solution $u^\eps$ to \eqref{eq:nona}.

Finally, we note that by applying the time-dependent change of variables \eqref{eq:tchange} to $u^\eps$, i.e., by defining
\[
{u}^\eps_\Omega(t,y)=e^{it\Omega\cdot L}u^\eps(t,y)=u^\eps\big(t,e^{tR_{\Omega}}y\big),
\]
we obtain the solution $u^\eps_\Omega$ to a Schr\"odinger equation with rotation
\begin{equation}
i \partial_t u_\Omega^\eps +\frac{1}{2} u_\Omega^\eps = \mathcal{V}^\e_{\Omega}(t,y) u_\Omega^\eps + \Omega \cdot L u^\eps_\Omega, \quad u_\Omega^\eps(s,x) = u_0(x),
\end{equation}
where $\mathcal{V}^\e_\Omega(t,y) = \mathcal{V}^\e \big(t,e^{tR_{\Omega}}y\big)$. Since $R_\Omega$ is the generator of an orthogonal time-dependent rotation, this change of 
variables leaves every $L^p(\R^d)$-norm of $u^\eps$ invariant and guarantees that $\mathcal{V}^\e_\Omega$ is of the same class as $\mathcal{V}^\e$ itself. In particular, it holds 
\[
M= \| \nabla_y^2 \mathcal{V}^\e \|_{L^2_tL^\infty_x}=  \| \nabla_y^2 \mathcal{V}^\e_{\Omega} \|_{L^2_tL^\infty_x}.
\]
The short-time Strichartz estimates available for solutions $u^\eps$ without rotation therefore directly transfer to $u^\eps_\Omega$, a fact which has already been recognized in \cite[Remark 2.2]{CDH}.


\bibliographystyle{amsplain}

\end{document}